\documentclass[11pt]{amsart}
\usepackage{graphicx} 
\usepackage{mathpazo}
\usepackage[margin=2cm]{geometry}
\usepackage{hyperref}

\theoremstyle{plain}
\newtheorem{theorem}{Theorem}

\newtheorem{lemma}[theorem]{Lemma}

\newtheorem{corollary}[theorem]{Corollary}
\theoremstyle{remark}
\newtheorem{remark}[theorem]{Remark}

\makeatletter
\def\@bignumber#1#2{%
  \ifx#2\end
    #1\let\next\@gobble
  \else
    #1\hspace{0pt plus 1pt}\let\next\@bignumber
  \fi
  \next#2}
\newcommand{\bignumber}[1]{\@bignumber#1\end}
\makeatother

\title{Factorizations into irreducible numerical semigroups}
\author{Pedro A. García-Sánchez}
\date{October 2024}

\begin{document}

\begin{abstract}
Every numerical semigroup can be expressed as an intersection of irreducible numerical semigroups. We show that the unions of sets of lengths of factorizations of numerical semigroups into irreducible numerical semigroups are all equal to $\mathbb{N}_{\ge 2}$.  
\end{abstract}

\maketitle

\section{Introduction}

A \emph{numerical semigroup} is a co-finite submonoid of the set of non-negative integers (hereafter denoted as $\mathbb{N}$) under addition.

A numerical semigroup $S$ is said to be \emph{irreducible} if it cannot be expressed as the intersection of two numerical semigroups properly containing it. Every numerical semigroup is the intersection of finitely many irreducible numerical semigroups \cite[Proposition~4.4]{ns}. Given $S$ a numerical semigroup and $S_1,\dots,S_n$ irreducible numerical semigroups such that $S=S_1\cap \dots \cap S_n$, we will say that $S_1\cap \dots \cap S_n$ is a \emph{factorization} of $S$ (of length $n$) into the irreducibles $S_1,\dots,S_n$ if $S\neq \bigcap_{j\in J}S_j$ for every proper subset $J$ of $\{1,\dots,n\}$. This means that $S$ can be expressed as an intersection of $S_1,\dots,S_n$, and that this intersection cannot be refined. This definition of factorization follows the spirit of minimal idempotent factorization as introduced in \cite{c-t} (minimal factorizations appear already in \cite{a-t}), if we consider our monoid to be the set of all numerical semigroups, which we denote by $\mathfrak{N}$, and the operation is the intersection of numerical semigroups.

In \cite{d-gs-r}, the question whether there is a positive integer $N$ such that every numerical semigroup admits a factorization into at most $N$ irreducibles was raised. The answer to this question was answered in the negative in \cite{b-f}, where two families of numerical semigroups were presented with unbounded factorizations into irreducibles. 

In the present manuscript, we give yet another family that shows that the unions of sets of lengths of factorizations are always the set of integers greater than one.

Let us recall the (classic) concept of unions of sets of lengths of factorization in an atomic monoid $M$. For every positive integer $k$, the set $\mathcal U_k (M)$ is defined as the set of all $l\in \mathbb{N}$ for which there is an equation of the form
\[
u_1 \dots u_k = v_1 \dots v_{l},
\]
where $u_1, \dots , u_k, v_1, \dots, v_{l}$ are irreducible elements of $M$. In other words, the sets $\mathcal U_k (M)$ are the unions of all sets of lengths containing $k$. Research focused on their inner structure as well as on their supremum $\rho_k (M) = \sup \mathcal U_k (M)$, called the \emph{$k^{th}$ elasticity}, can be found in \cite{Ga-Ge09b, F-G-K-T17, Bl-Ga-Ge11a, Ba-Sm18, Tr19a, Sm13a, Fa-Tr18a, Ge-Go-Tr21, Ge-Kh22b, Po20a} for commutative integral domains, for unit-cancellative commutative monoids (in particular for numerical monoids and Krull monoids), and also for non-commutative rings and monoids. 


Chapter~3 of \cite{ns} contains an extensive introduction to irreducible numerical semigroups, and provides several characterizations on when a numerical semigroup is decomposed as the intersection of irreducible numerical semigroups. In particular, we will make use of \cite[Proposition~4.48]{ns}, where a characterization is given in terms of the special gaps of the numerical semigroup.

The set of \emph{special gaps} of a numerical semigroup $S$ is the set of gaps $g$ of $S$ such that $S\cup\{g\}$ is a numerical semigroup. It follows easily that 
\[ 
\operatorname{SG}(S)=\{g\in \mathbb{N}\setminus S : g\in \operatorname{PF}(S), 2g\in S(i)\},
\]
where $\operatorname{PF}(S)$ is the set of \emph{pseudo-Frobenius} numbers of $S$, that is, the set of integers $z$ such that $z+s\in S$ for all $s\in S\setminus \{0\}$.

Proposition~4.48 in \cite{ns} states that $S=S_1\cap \dots \cap S_n$, with $S_1,\dots, S_n$ numerical semigroups containing $S$, if and only if for all $h\in \operatorname{SG}(S)$, there is $i\in \{1,\dots,n\}$ such that $h\not\in S_i$.

Notice, that in particular \cite[Proposition~4.8]{ns}, implies that the maximal length of a factorization of a numerical semigroup $S$ into irreducibles is at most the cardinality of $\operatorname{SG}(S)$.

\section{Unions of sets of lengths of factorizations}\label{sec-unions}


Given a numerical semigroup $S$, we denote by $\mathsf{L}^m(S)$ the set of positive integers $n$ such that there exists a factorization of $S$ into $n$ irreducibles, and it is known as the \emph{set of lengths} of factorizations of $S$ (we are using the superscript $m$ to emphasize that we are considering minimal factorization in the sense that they cannot be refined; see for instance \cite{g} for a nice overview on the sets of lengths of factorizations in monoids). 

The elasticity of the factorizations of $S$ is defined as $\rho^m(S)=\sup(\mathsf{L}^m(S))/\min(\mathsf{L}^m(S))$; one can define the elasticity of $\mathfrak{N}$ as $\rho^m(\mathfrak{N})=\sup\{ \rho^m(S) : S\in\mathfrak{N}\}$. Lower and upper bounds for $\min(\mathsf{L}^m(S))$ can be found in \cite{r-b}.

For a positive integer $k$, the union of sets of lengths $\mathcal{U}_k^m(\mathfrak{N})$ is defined as the set of positive integers $l$ such that there exists a numerical semigroup having a factorization of length $k$ and another of length $l$. This concept was introduced in \cite{ch-s}, and since then it has been studied in many different contexts and scenarios as mentioned in the introduction. The study of $\mathcal{U}_k^m(\mathfrak{N})$ requires special care, since it is probably the first time that unions of sets of lengths have been studied for minimal idempotent factorizations.

Our main goal, is to prove that $\mathcal{U}^m_k(\mathfrak{N})=\mathbb{N}_{\ge 2}$ (the set of integers greater than one) for every integer $k\ge 2$. To do this, we relay on the set of factorizations of a very particular family of numerical semigroups, which we introduce next.

Let $i$ be an odd integer greater than three. Define 
\[
T(i)=\{0,(i+1)/2, \dots, i-1,i+1,\to\},
\]
which is  an irreducible numerical semigroup by \cite[Proposition~2.7]{b-r}.

Let $S(i)=\langle 2,i\rangle \cap T(i)$. Then,
\begin{equation}\label{eq:Si}
S(i)=\begin{cases}
\left\{0, \frac{i+1}2, \frac{i+1}2+2,\dots ,i-1,i+1,\to\right\}, \text{ if }  (i+1)/2 \text{ is even},\\
\left\{0, \frac{i+1}2+1, \frac{i+1}2+3,\dots ,i-1,i+1,\to\right\}, \text{ otherwise. }
\end{cases}
\end{equation}

We start by studying the special gaps of $S(i)$.

\begin{lemma}\label{lem:sg-si}
Let $i$ be an odd positive integer. 
\begin{itemize}
    \item If $(i+1)/2$ is even, then \[\operatorname{SG}(S(i))=([(i+1)/4,(i-1)/2]\cap 2\mathbb{N})\cup \{(i+1)/2+2k+1 : k \in \{0,\dots,(i-3)/4\}\}.\]
    \item If $(i+1)/2$ is odd, then
    \[
    \operatorname{SG}(S(i))=([(i+3)/4,(i-1)/2]\cap 2\mathbb{N})\cup \{(i+1)/2+2k : k \in \{0,\dots,(i-1)/4\}\}.
    \]
\end{itemize}
\end{lemma}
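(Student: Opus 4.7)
The plan is to verify the three defining conditions for a special gap---$g \notin S(i)$, $g + s \in S(i)$ for every $s \in S(i) \setminus \{0\}$, and $2g \in S(i)$---by inspecting the explicit description of $S(i)$ given in (\ref{eq:Si}). Let $m = \min(S(i) \setminus \{0\})$, so that $m = (i+1)/2$ when $(i+1)/2$ is even and $m = (i+3)/2$ otherwise. In both cases $m$ is even, the elements of $S(i) \setminus \{0\}$ are the even integers in $[m, i-1]$ together with all integers $\geq i+1$, and consequently the gap set of $S(i)$ equals $[1, m-1] \cup \{m+1, m+3, \ldots, i-2\} \cup \{i\}$.

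The first step is to identify $\operatorname{PF}(S(i))$. Fix a gap $g$; for $s \geq i+1$, $g + s$ is automatically in $S(i)$, so only $s \in \{m, m+2, \ldots, i-1\}$ needs attention. If $g + m \geq i+1$, i.e., $g \geq i+1-m$, then $g + s \geq i+1$ for every such $s$ and $g \in \operatorname{PF}(S(i))$. Otherwise $g + m \leq i$; since $m$ and every $s$ in the middle band are even, $g + s$ has the parity of $g$. An even $g$ produces $g + s$ even and $\geq m$, which lies either in the middle band or is $\geq i+1$, hence in $S(i)$; an odd $g$ makes $g + m$ an odd integer in $[m+1, i]$, a set entirely of gaps, so the PF condition fails already at $s = m$. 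Thus $\operatorname{PF}(S(i))$ is the union of all even gaps and all odd gaps with $g \geq i + 1 - m$.

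The second step is to impose $2g \in S(i)$. Any odd PF-gap satisfies $g \geq (i+1)/2$ in both cases---directly from $g \geq i+1-m$ in the first case, and in the second case because $(i-1)/2 = i+1-m$ is even, so the smallest odd gap with $g \geq (i-1)/2$ is $(i+1)/2$---and hence $2g \geq i+1$, placing $2g$ in $S(i)$ automatically. For an even PF-gap, $2g$ is even and lies in $S(i)$ precisely when $2g \geq m$, equivalently $g \geq m/2$, giving the lower bound $(i+1)/4$ in the first case and $(i+3)/4$ in the second. Intersecting with $2\mathbb{N}$ yields the ``small'' special gaps.

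Finally, I would match the resulting sets with the explicit arithmetic progressions of the lemma. The odd large special gaps form $\{(i+3)/2, (i+7)/2, \ldots, i\}$ in the first case and $\{(i+1)/2, (i+5)/2, \ldots, i\}$ in the second, which are the progressions indexed by $k$ in the statement. The parity of $(i-1)/2$---odd when $(i+1)/2$ is even, even when $(i+1)/2$ is odd---determines whether the largest even gap in $[1,(i-1)/2]$ is $(i-3)/2$ or $(i-1)/2$, matching the closed intervals $[(i+1)/4,(i-1)/2]$ and $[(i+3)/4,(i-1)/2]$. The only technical hurdle is precisely this parity bookkeeping at the endpoints $(i-1)/2$ and $(i+1)/2$; once the parities of $m$, $(i-1)/2$, and $(i+1)/2$ are pinned down, the identification with the lemma's explicit description is straightforward.
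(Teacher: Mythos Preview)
Your argument is correct and follows essentially the same route as the paper's proof: both verify the pseudo-Frobenius condition by a parity-and-position case split on the gap $g$ relative to $(i+1)/2$, and then impose $2g\in S(i)$ via the lower bound $g\ge m/2$. Your version is slightly more systematic—you introduce $m$ to treat both parities uniformly and you explicitly separate the computation of $\operatorname{PF}(S(i))$ from the $2g$-condition, whereas the paper verifies membership in $\operatorname{SG}(S(i))$ directly and leaves the case $(i+1)/2$ odd to the reader—but the substance of the verification is the same.
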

\begin{proof}
We distinguish two cases, depending on the parity of $(i+1)/2$.

Suppose that $(i+1)/2$ is even. By \eqref{eq:Si}, if $g$ is a gap larger than $(i+1)/2$, then $g=(i+1)/2+2k+1$ for some non-negative integer $k$. For every non-negative integer $l$, $g+(i+1)/2+2l= i+1+2(l+k)+1\in S$. Also, $g+n\in S(i)$ if $n$ is an integer greater than $i$, and $2g\in S(i)$. Therefore, $g\in \operatorname{SG}(S(i))$. 

Now take $g$ a positive integer such that $g< (i+1)/2$; whence $g\le (i+1)/2-1=(i-1)/2$. If $g$ is odd, then $g+(i+1)/2$ is odd and smaller than or equal to $i$. Thus, $g+(i+1)/2$ is not in $S$, which means that $g\not\in \operatorname{SG}(S(i))$. If $g$ is even, then $g+s$, with $s\in S(i)\setminus\{0\}$ is either larger than $i$ or it is even and smaller than $i-1$; in both cases $g+s\in S(i)$. The condition $2g\in S(i)$, is satisfied if $g$ is larger than or equal to $(i+1)/4$.

The proof for the case $(i+1)/2$ is odd is similar and it is left to the reader.
%
\end{proof}
Set 
\[
B(i)=\begin{cases}
\left\{\frac{i+1}2+1, \frac{i+1}2+3,\dots ,i\right\}, \text{ if }  (i+1)/2 \text{ is even},\\
\left\{\frac{i+1}2, \frac{i+1}2+2,\dots ,i\right\}, \text{ otherwise. }
\end{cases}
\]

\begin{lemma}\label{lem:fact-si-full}
    Let $i$ be an odd integer. Then, $S(i)=\bigcap_{j\in B(i)} T(j)$ is a factorization of $S(i)$.
\end{lemma}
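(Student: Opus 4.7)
The plan is to verify the two defining conditions of a factorization: that $\bigcap_{j \in B(i)} T(j) = S(i)$, and that no single $T(j)$ can be omitted (irredundancy of single omissions implies irredundancy of every proper subfamily, since a smaller subfamily yields a larger intersection). The main tools are Proposition~4.48 of \cite{ns} for the equality, combined with a direct witness argument for irredundancy.

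First I would record that for odd $j$, the gap set of $T(j)$ is $\{1,2,\dots,(j-1)/2\} \cup \{j\}$, so $T(j)$ contains every integer $\ge (j+1)/2$ except $j$ itself. Using this, a short elementwise check split by the parity of $(i+1)/2$ shows $S(i) \subseteq T(j)$ for every $j \in B(i)$; the only delicate point is that the middle-range elements of $S(i)$ all have parity opposite to $j$, so the single middle gap $j$ of $T(j)$ does not obstruct the inclusion. With $S(i) \subseteq T(j)$ established, Proposition~4.48 reduces the equality $\bigcap_{j \in B(i)} T(j) = S(i)$ to showing that every $h \in \operatorname{SG}(S(i))$ is excluded from some $T(j)$ with $j \in B(i)$. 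A direct comparison of Lemma~\ref{lem:sg-si} with the definition of $B(i)$ reveals that the upper block of $\operatorname{SG}(S(i))$ (the list indexed by $k$) coincides exactly with $B(i)$ in both parity cases, so each such $h$ is a gap of $T(h)$. The remaining lower block of $\operatorname{SG}(S(i))$ consists of integers at most $(i-1)/2$, all of which lie in the gap set $\{1,\dots,(i-1)/2\}$ of $T(i)$, and $i \in B(i)$.

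For minimality I would fix $j_0 \in B(i)$ and exhibit $j_0$ itself as a witness. Since $j_0$ belongs to the upper block of $\operatorname{SG}(S(i))$, we have $j_0 \notin S(i)$. For each $k \in B(i) \setminus \{j_0\}$ one has $k \le i$ and $j_0 \ge (i+1)/2 > (i-1)/2 \ge (k-1)/2$, so $j_0$ is neither in $\{1,\dots,(k-1)/2\}$ nor equal to $k$; hence $j_0 \in T(k)$. Consequently $\bigcap_{k \in B(i) \setminus \{j_0\}} T(k)$ contains $j_0 \notin S(i)$ and so strictly contains $S(i)$, which is the required irredundancy.

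The only mildly technical step is the elementwise inclusion $S(i) \subseteq T(j)$, which nevertheless reduces to comparing arithmetic progressions of a fixed parity against the interval $[(j+1)/2, j-1]$ (missing $j$); the rest of the argument is driven by the clean observation that $B(i)$ coincides with the upper block of $\operatorname{SG}(S(i))$, so each element $j_0 \in B(i)$ automatically serves as its own irredundancy witness.
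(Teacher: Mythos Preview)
Your proof is correct and follows essentially the same approach as the paper: both apply Proposition~4.48 of \cite{ns}, identify the ``upper block'' of $\operatorname{SG}(S(i))$ with $B(i)$, and use the element $j_0$ itself as the irredundancy witness (in the paper this is implicit in the computation of $\{h\in\operatorname{SG}(S(i)):h\notin T(j)\}$, which shows $j$ is missed only by $T(j)$). The only minor differences are that you make the inclusion $S(i)\subseteq T(j)$ and the minimality argument explicit, whereas the paper instead records the full set of special gaps excluded by each $T(j)$ and leaves the irredundancy conclusion to the reader.
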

\begin{proof}
Suppose that $(i+1)/2$ is even and let $j\in B(i)$. Then, $j=(i+1)/2+2k+1$ for some $k\in \{0,\dots,(i-3)/4\}$. It follows that 
\[
T(j)=\{0,(i+1)/4+k+1,\dots, (i+1)/2+2k,(i+1)/2+2k+2,\to\}.
\]

Therefore,
\[
\{h\in \operatorname{SG}(i) : g\not\in T(j)\}=([(i+1)/4,(i+1)/4+k]\cap 2\mathbb{N}) \cup\{j\}.
\] 

By \cite[Proposition~4.48]{ns}, we have that $S(i)=\bigcap_{j\in B(i)} T(j)$. Moreover, this intersection cannot be refined.

The case $(i+1)/2$ odd is analogous.
\end{proof}

If $i$ is an odd integer such that $(i+1)/2=2k$, with $k$ a positive integer, then the proof of Lemma~\ref{lem:fact-si-full} is telling us that $S(i)$ admits a factorization of length $1+(i-3)/k=(i+2)/4=k$. If $k\ge 3$, this implies that $S(i)$ has a factorization of length two and another of length $k$. This, in particular, forces $\mathcal{U}_2^m(\mathfrak{N})= \mathbb{N}_{\ge 2}$. 


\begin{corollary}
    The sequence $\{\rho^m(S(i))\}_{i\in 4\mathbb{N}+3}$ is not bounded. In particular, $\rho^m(\mathfrak{N})=\infty$.
\end{corollary}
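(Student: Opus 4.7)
The plan is to read off the length-$2$ factorization that is built into the very definition of $S(i)$, combine it with the factorization of length $|B(i)|$ furnished by Lemma~\ref{lem:fact-si-full}, and let $|B(i)|$ grow.

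First I would fix $i\in 4\mathbb{N}+3$ and write $i=4k-1$, so that $(i+1)/2=2k$ is even and $k=(i+1)/4$. The case just discussed in the paragraph before the corollary applies. I need $k\ge 3$, i.e.\ $i\ge 11$; the finitely many earlier $i$ don't affect the behaviour of the sequence.

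Next I would exhibit two factorizations of $S(i)$ of distinct lengths. The length-$2$ factorization is the defining one, $S(i)=\langle 2,i\rangle\cap T(i)$: the numerical semigroup $\langle 2,i\rangle$ is symmetric (hence irreducible), $T(i)$ is irreducible by \cite[Proposition~2.7]{b-r}, and the intersection is irredundant because $\langle 2,i\rangle$ contains the even number $2\notin S(i)$ while $T(i)$ contains the odd element $(i+1)/2+1\notin S(i)$. So $\min\mathsf{L}^m(S(i))\le 2$. The longer factorization comes from Lemma~\ref{lem:fact-si-full}: counting the elements of $B(i)=\{(i+1)/2+1,(i+1)/2+3,\dots,i\}$ one finds $|B(i)|=k$, so $\sup\mathsf{L}^m(S(i))\ge k$.

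Combining these two bounds,
\[
\rho^m(S(i))\ =\ \frac{\sup\mathsf{L}^m(S(i))}{\min\mathsf{L}^m(S(i))}\ \ge\ \frac{k}{2}\ =\ \frac{i+1}{8},
\]
which tends to $\infty$ as $i\to\infty$ through $4\mathbb{N}+3$. Hence the sequence $\{\rho^m(S(i))\}_{i\in 4\mathbb{N}+3}$ is unbounded. Since $\rho^m(\mathfrak{N})=\sup\{\rho^m(S):S\in\mathfrak{N}\}\ge\rho^m(S(i))$ for every $i$, the second statement $\rho^m(\mathfrak{N})=\infty$ follows immediately.

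There is no real obstacle here: the only point that deserves care is verifying that $\langle 2,i\rangle\cap T(i)$ is a genuine (i.e.\ irredundant) factorization of length $2$, and that is a direct check from the description of $S(i)$ in \eqref{eq:Si}. Everything else is already contained in the preceding lemma and the paragraph following it.
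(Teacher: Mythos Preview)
Your argument is correct and follows the same route as the paper: you use the built-in decomposition $S(i)=\langle 2,i\rangle\cap T(i)$ to bound $\min\mathsf{L}^m(S(i))$ by $2$ and Lemma~\ref{lem:fact-si-full} to push $\sup\mathsf{L}^m(S(i))$ up to $|B(i)|=(i+1)/4$, exactly as the paper does (with the reparametrization $i=4k-1$ instead of $i=4k+3$). The only difference is that you spell out the irredundancy check for the length-$2$ factorization, which the paper leaves implicit in its ``Clearly, $\min(\mathsf{L}^m(S(i)))=2$''.
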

\begin{proof}
    Let $k$ be a non-negative integer, and let $i=4k+3$. Then, $(i+1)/2=2(k+1)$ is even. Clearly, $\min (\mathsf{L}^m(S(i)))=2$. By Lemma~\ref{lem:fact-si-full}, $(k+1)\le \mathsf{L}^m(S(i))$. Thus, $\rho^m(S(i))\ge (k+1)/2$, and the statement follows.
\end{proof}

In order to prove that $\mathcal{U}_k^m(\mathfrak{N})=\mathbb{N}_{\ge 2}$ for every integer $k$ larger than two, we will make use of the following alternate factorizations of $S(i)$.

\begin{lemma}\label{lem:factorization-si-general}
    Let $i$ be an odd integer such that $(i+1)/2$ is even. Then, for all $t\in \{1,\dots, (i-3)/4\}$, 
    \begin{equation}\label{eq:factorization-si-general}
            S(i)=\langle 2, (i+1)/2+2t+1\rangle \cap \left( \bigcap_{k=t}^{(i-3)/4} T\left(\frac{i+1}2+2k+1\right) \right).
    \end{equation}
    Moreover, this is a factorization of $S(i)$ (that is, the intersection cannot be refined).
\end{lemma}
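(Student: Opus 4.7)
The plan is to apply \cite[Proposition~4.48]{ns}, which reduces the claim to three verifications: (i) each factor on the right-hand side of \eqref{eq:factorization-si-general} contains $S(i)$; (ii) every $h\in \operatorname{SG}(S(i))$ is avoided by at least one factor; (iii) no factor can be removed without failing (ii). To make these checks, I would first record the gap structure of the factors. Since $m:=(i+1)/2+2t+1$ is odd, $\langle 2,m\rangle$ is a symmetric (hence irreducible) numerical semigroup whose gaps are exactly the odd integers $\{1,3,\dots,m-2\}$. For $j_k:=(i+1)/2+2k+1$, the description $T(j_k)=\{0,(j_k+1)/2,\dots,j_k-1,j_k+1,\to\}$ (already used in the proof of Lemma~\ref{lem:fact-si-full}) gives the gap set $\{1,2,\dots,(i+1)/4+k\}\cup\{j_k\}$. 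With this in hand, (i) follows from routine inequalities using $t\le (i-3)/4$: the nonzero elements of $S(i)$ are either even (hence in $\langle 2,m\rangle$) or odd and $\ge i+2\ge m$; and for each $T(j_k)$ with $k\le (i-3)/4$, the smallest nonzero element $(i+1)/2$ of $S(i)$ exceeds $(i+1)/4+k$, while $j_k$ itself is odd and $\le i$, hence not in $S(i)$.

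For (ii), I would run through the two families of special gaps listed in Lemma~\ref{lem:sg-si}. If $g=j_k=(i+1)/2+2k+1$ with $k\ge t$, then by construction $g\not\in T(j_k)$. If $g=j_k$ with $k<t$, then $g$ is odd and $g<m$, so $g\not\in\langle 2,m\rangle$. For an even special gap $g\in[(i+1)/4,(i-1)/2]\cap 2\mathbb{N}$, one simply observes that $T(i)=T(j_{(i-3)/4})$ appears in the intersection (since $(i-3)/4\ge t$) and has gap set $\{1,\dots,(i-1)/2\}\cup\{i\}$, which contains $g$.

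Finally, for (iii), I would produce a ``signature'' special gap for each factor. The factor $\langle 2,m\rangle$ is the only one missing the odd gap $g_0=(i+1)/2+1$: indeed, because $t\ge 1$, every $T(j_{k'})$ in the intersection has $k'\ge 1$, and one checks that $g_0\le (i+1)/4+k'$ would force $k'\ge (i+5)/4>(i-3)/4$, while $g_0=j_{k'}$ forces $k'=0$; both are impossible, so $g_0\in T(j_{k'})$ for every $k'$ in the index set. Similarly, $T(j_k)$ (with $k\ge t$) is the only factor missing $j_k$: since $j_k$ is odd and $\ge m$ it lies in $\langle 2,m\rangle$, while for any other $k'\in\{t,\dots,(i-3)/4\}$ one has $j_k>(i+1)/4+k'$ and $j_k\ne j_{k'}$, so $j_k\in T(j_{k'})$. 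The main bookkeeping hurdle is just making sure the inequalities $k,k',t\le (i-3)/4$ and $t\ge 1$ are used correctly at each step; once the gap sets of the factors are written down explicitly, each item of (i)--(iii) reduces to a one-line arithmetic check.
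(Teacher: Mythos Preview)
Your proof is correct and follows essentially the same route as the paper: invoke \cite[Proposition~4.48]{ns} and, using Lemma~\ref{lem:sg-si} together with the explicit gap sets of $\langle 2,m\rangle$ and of the $T(j_k)$, test each special gap of $S(i)$ against the factors. Your treatment of irredundancy---exhibiting for each factor a special gap missed by that factor alone---is in fact more explicit than the paper's own argument.
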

\begin{proof}
    We are going to make use of \cite[Proposition~4.48]{ns} once more: we show for every $g\in \operatorname{SG}(S(i))$, there is exactly one numerical semigroup not containing $g$ in the right-hand side of \eqref{eq:factorization-si-general}. As $i=\frac{i+1}2+2\frac{i-3}4+1$, we deduce from the proof of Lemma~\ref{lem:fact-si-full}, that 
    \begin{align*}
            \{h\in \operatorname{SG}(i) : g\not\in T(i)\}& =([(i+1)/4,(i+1)/4+(i-3)/4]\cap 2\mathbb{N}) \cup\{i\}\\
            &=([(i+1)/4,(i-2)/2]\cap 2\mathbb{N}) \cup\{i\}.
    \end{align*}   
    This means by Lemma~\ref{lem:sg-si} that every special gap of $S(i)$ smaller than $(i+1)/2$ is not  in $T(i)$.

    Notice that every special gap $g$ of $S(i)$ larger than $(i+1)/2$ and smaller than $(i+1)/2+2t+1$ is odd, and thus $g\not\in \langle 2,(i+1)/2+2t+1\rangle$.

    Finally, if $g$ is a special gap of $S(i)$ in the interval $[(i+1)/2+2t+1,i]$, then $g=(i+1)/2+2k+1$ for some $k\in \{t,\dots,(i-3)/4\}$, and so $g\not \in T((i+1)/2+2k+1)$.
\end{proof}

With this result we have shown that if $i$ is an odd integer such that $(i+1)/2=2k$, with $k$ an integer greater than one, then $S(i)$ admits factorizations of lengths ranging from two to $k$, thus proving the following result.




\begin{theorem}
    Let $k$ be an integer greater than two. Then, $\mathcal{U}_k^m(\mathfrak{N})=\mathbb{N}_{\ge 2}$.
\end{theorem}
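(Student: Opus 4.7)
The plan is to verify both inclusions of $\mathcal{U}_k^m(\mathfrak{N}) = \mathbb{N}_{\geq 2}$ for fixed $k \geq 3$. The inclusion $\mathcal{U}_k^m(\mathfrak{N}) \subseteq \mathbb{N}_{\geq 2}$ is essentially automatic: any $l \in \mathcal{U}_k^m(\mathfrak{N})$ comes from a numerical semigroup $S$ admitting a factorization of length $k \geq 3$, so $S$ is not irreducible, and therefore every factorization of $S$ — in particular the one of length $l$ — must have at least two factors.

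For the reverse inclusion $\mathbb{N}_{\geq 2} \subseteq \mathcal{U}_k^m(\mathfrak{N})$, the natural strategy is to exhibit, for each $l \geq 2$, a single numerical semigroup that simultaneously admits a factorization of length $k$ and one of length $l$. The family $S(i)$ is tailored for this: Lemma~\ref{lem:factorization-si-general} supplies, for odd $i$ with $(i+1)/2$ even, a parameterized collection of factorizations of $S(i)$ indexed by $t \in \{1, \ldots, (i-3)/4\}$, whose lengths are $1 + ((i-3)/4 - t + 1) = (i-3)/4 + 2 - t$.

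Concretely, I would set $n := \max(k, l)$ and $i := 4n - 1$, so that $(i+1)/2 = 2n$ is even and $(i-3)/4 = n - 1$. Letting $t$ range over $\{1, \ldots, n-1\}$ in Lemma~\ref{lem:factorization-si-general} then produces factorizations of $S(i)$ of every length $n + 1 - t \in \{2, 3, \ldots, n\}$. Since $n \geq k$ and $n \geq l$, both $k$ and $l$ lie in this range, so $S(i)$ admits a factorization of length $k$ and one of length $l$, whence $l \in \mathcal{U}_k^m(\mathfrak{N})$.

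There is no real obstacle beyond the arithmetic bookkeeping: checking that the lengths produced by Lemma~\ref{lem:factorization-si-general} are a strictly monotone linear function of $t$ and therefore exhaust $\{2, \ldots, n\}$ is a one-line count. All the substantive work — that each such intersection equals $S(i)$ and cannot be refined — has already been absorbed into Lemma~\ref{lem:factorization-si-general} via \cite[Proposition~4.48]{ns}, so the final argument is just the choice of $n$ large enough to accommodate both target lengths.
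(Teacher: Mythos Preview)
Your proposal is correct and follows essentially the same approach as the paper: use Lemma~\ref{lem:factorization-si-general} to see that $S(4n-1)$ admits factorizations of every length in $\{2,\ldots,n\}$, then pick $n=\max(k,l)$ so that both target lengths are realized in a single semigroup. You are simply more explicit than the paper about the easy inclusion $\mathcal{U}_k^m(\mathfrak{N})\subseteq\mathbb{N}_{\ge 2}$ and about the choice of $n$, but the substance is identical.
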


\begin{remark}
    Notice that Lemma~\ref{lem:fact-si-full} describes a factorization of length $k$ for $S(4k-1)$, and 
    Lemma~\ref{lem:factorization-si-general} offers  another factorization of length $k$, if we take $t=1$ and $k$ larger than two. This means that $S(4k-1)$ admits at least two different factorizations of the same length, and thus $(\mathfrak{N},\cap)$ is not length-factorial (see \cite{l-f}).
\end{remark}

\section{Future work}

Most of the experiments that led to our results were performed with the help of the \texttt{GAP} \cite{gap} package \texttt{numericalsgps} \cite{numericalsgps}. It seems that the sets of lengths of factorizations of the semigroups $S(i)$ are intervals. As a matter of fact, we were not able to find a numerical semigroup whose sets of lengths of factorizations is not an interval. It would be interesting to prove if for any numerical semigroup, its set of lengths of factorizations is an interval, or at least find a precise characterization of these sets of lengths.

\section*{Acknowledgements}

The author wishes to thank the Doctoral Academy of the University of Graz for their hospitality during the last visit of the author to that university. He also wishes to thank Alfred Geroldinger for proposing the problem central to this manuscript and for the fruitful discussions that followed after.

The author is partially supported by the grant number ProyExcel\_00868 (Proyecto de Excelencia de la Junta de Andalucía) and by the Junta de Andaluc\'ia Grant Number FQM--343. He also acknowledges financial support from the grant PID2022-138906NB-C21 funded by MICIU/AEI/10.13039/\bignumber{501100011033} and by ERDF ``A way of making Europe'', and from the Spanish Ministry of Science and Innovation (MICINN), through the ``Severo Ochoa and María de Maeztu Programme for Centres and Unities of Excellence'' (CEX2020-001105-M).

\end{document}